\documentclass{article}

\usepackage{graphicx}
\usepackage{amscd}
\usepackage{graphics,amsmath,amssymb}
\usepackage{amsthm}
\usepackage{amsfonts,enumerate}
\usepackage{latexsym}
\usepackage{mathrsfs}

\setlength{\textwidth}{6.4in}
\setlength{\oddsidemargin}{.05in}
\setlength{\evensidemargin}{.05in}
\setlength{\topmargin}{-.40in}
\setlength{\textheight}{8.6in}

\newtheorem{theorem}{Theorem}
\theoremstyle{plain}

\newtheorem{corollary}{Corollary}

\newtheorem{definition}{Definition}

\newtheorem{lemma}{Lemma}

\numberwithin{equation}{section}

\begin{document}

%%%%%%%%%%%%%%%%%%%%%%%%%%%%%%%%%%%%%%%%%%%%%%%%%%%%%%%%%%%%%%%%
%%%%%%%%%%%%%%             T I T L E              %%%%%%%%%%%%%%
%%%%%%%%%%%%%%           A U T H O R S            %%%%%%%%%%%%%%
%%%%%%%%%%%%%%           A D D R E S S            %%%%%%%%%%%%%%
%%%%%%%%%%%%%%%%%%%%%%%%%%%%%%%%%%%%%%%%%%%%%%%%%%%%%%%%%%%%%%%%
\begin{center}
%\vskip 2cm
{\LARGE \bf
On the Harmonic and Hyperharmonic \medskip Fibonacci Numbers}\\
\vskip 1cm

{\large Naim TUGLU$^{1}$, Can KIZILATE\c{S}$^{2}$ and Seyhun KESIM$^{2}$} \\
\vskip 0.5cm

$^{1}$Gazi University, Faculty of Art and Science, Department of Mathematics,\smallskip \\
Teknikokullar 06500, Ankara-Turkey \smallskip \\
{\tt naimtuglu@gazi.edu.tr}\\
\vskip 0.5cm

$^{2}$B\"{u}lent Ecevit University, Faculty of Art and Science, Department of Mathematics, \smallskip\\ 67100, Zonguldak-Turkey \smallskip \\
{\tt cankizilates@gmail.com} \ \ \ {\tt seyhun.kesim@beun.edu.tr}
\vskip 0.8cm
\end{center}

%%%%%%%%%%%%%%%%%%%%%%%%%%%%%%%%%%%%%%%%%%%%%%%%%%%%%%%%%%%%%%%%
%%%%%%%%%%%%%%                                    %%%%%%%%%%%%%%
%%%%%%%%%%%%%%           A B S T R A C T          %%%%%%%%%%%%%%
%%%%%%%%%%%%%%                                    %%%%%%%%%%%%%%
%%%%%%%%%%%%%%%%%%%%%%%%%%%%%%%%%%%%%%%%%%%%%%%%%%%%%%%%%%%%%%%%

\begin{abstract}
In this paper, we study the theory of the harmonic and the hyperharmonic Fibonacci numbers. Also, we get some combinatoric identities like as harmonic and hyperharmonic numbers and we obtain some useful formulas for $\mathbb{F}_{n}$, which is finite sums of reciprocals of Fibonacci numbers.  We obtain spectral and Euclidean norms of circulant matrices involving harmonic and hyperharmonic Fibonacci numbers.\smallskip \\
\noindent \textbf{Keywords:} Harmonic number, Hyperharmonic number, Harmonic Fibonacci number, Hyperharmonic Fibonacci number, Matrix norm  \smallskip \\
\textbf{Mathematics Subject Classiffication:} 11B39, 05A19; 05A10, 15A60
\end{abstract}

%%%%%%%%%%%%%%%%%%%%%%%%%%%%%%%%%%%%%%%%%%%%%%%%%%%%%%%%%%%%%%%%
%%%%%%%%%%%%%%                                    %%%%%%%%%%%%%%
%%%%%%%%%%%%%%      I N T R O D U C T I O N       %%%%%%%%%%%%%%
%%%%%%%%%%%%%%                                    %%%%%%%%%%%%%%
%%%%%%%%%%%%%%%%%%%%%%%%%%%%%%%%%%%%%%%%%%%%%%%%%%%%%%%%%%%%%%%%
\thispagestyle{empty}

\section{Introduction}

The harmonic numbers have many applications in combinatorics and other areas. Many authors have studied these numbers. The $n^{\text{th}}$ harmonic number, denoted by $H_{n}$, is defined by
	\begin{equation*}
		H_{n}=\sum\limits_{k=1}^{n}\frac{1}{k}\text{,}
	\end{equation*}
where $H_{0}=0$. The $n^{\text{th}}$ harmonic number $H_{n}$ can be expressed as
	\begin{equation*}
		H_{n}=\dfrac{ {n+1 \brack 2} }{n!}
	\end{equation*}
where ${n \brack k}$ denotes the Stirling number of the first kind, counting the permutations of $n$ elements that are the product of $k$ disjoint cycles.

Several interesting properties of harmonic numbers can be found in \cite{1}.  For $n\geq 1$, some properties are following:
	\begin{eqnarray*}
		&&\sum\limits_{k=1}^{n-1}H_{k} =nH_{n}-n\text{,} \\
		&&\sum\limits_{k=1}^{n-1} \biggl(\displaystyle{k\atop m}\biggr) H_{k} = \biggl(\displaystyle{n\atop m+1}\biggr) \left (H_{n}-\frac{1}{m+1} \right )
		\text{,} \\
		&&\sum\limits_{k=1}^{n-1}kH_{k} =\frac{n^{\underline{2}}}{2}\left (H_{n}-\frac{1}{2} \right )
		\text{,}
	\end{eqnarray*}
where $n^{\underline{2}}=n(n-1)$ and $m$ is non-negative integer. 

In \cite{2}, Spivey proved that for $n\geq 1$,
	\begin{eqnarray*}
		&&\sum\limits_{k=0}^{n} \biggl(\displaystyle{n\atop k}\biggr) H_{k} =2^{n}\left (H_{n}-\sum\limits_{k=1}^{n}\dfrac{1}{k2^{k}}\right)\text{,} \\
		&&\sum\limits_{k=0}^{n} \biggl(\displaystyle{n\atop k}\biggr) (-1)^{k}H_{k} =-\dfrac{1}{n}\text{.}
	\end{eqnarray*}

Harmonic numbers have been generalized by many authors \cite{3,4,5,6,7}. In \cite{5}, Conway and Guy defined the $n^{\text{th}}$ hyperharmonic number of order $r$, $H_{n}^{(r)}$ for $n,r\geq 1$ by the following recurrence relations:
	\begin{equation*}
		H_{n}^{(r)}=\sum\limits_{k=1}^{n}H_{k}^{(r-1)}\text{,}
	\end{equation*}
where $H_{n}^{(0)}=\dfrac{1}{n}$ and $H_{n}^{(1)}=H_{n}$ is the $n^{\text{th}} $ ordinary harmonic number.

Furthermore these numbers can be expressed by binomial coefficients and ordinary harmonic numbers \cite{5}, as follows: 
	\begin{equation*}
		H_{n}^{(r)}=\binom{n+r-1}{r-1}(H_{n+r-1}-H_{r-1}).
	\end{equation*}

In \cite{4}, Benjamin and et all. gave the following properties of hyperharmonic numbers:
	\begin{eqnarray}
		&&H_{n}^{(r)}=\sum\limits_{t=1}^{n}\binom{n+r-t-1}{r-1}\frac{1}{t}\text{,} \label{har0} \\
		&&H_{n}^{(r)}=\sum\limits_{t=1}^{n}\binom{n+r-m-t-1}{r-m-1}H_{t}^{(m)},
	\end{eqnarray} 
where $0\leq m\leq r-1.$ In \cite{3}, Bahsi and Solak defined a special matrix whose entries are hyperharmonic numbers and gave some properties of this matrix.

The Fibonacci sequence is defined by the following recurrence relation, for $n\geq 0$
	\begin{equation*}
		F_{n+2}=F_{n+1}+F_{n}
	\end{equation*}
with $F_{0}=0$, $F_{1}=1.$ Similar to Fibonacci sequence, Lucas sequence is defined by the following recurrence relation, for $n\geq 0$
	\begin{equation*}
		L_{n+2}=L_{n+1}+L_{n}
	\end{equation*}
with $L_{0}=2$, $L_{1}=1.$

In \cite{8}, Dil and Mez\"{o} defined hyperfibonacci numbers, $F_{n}^{(r)}$, for $r$ positive integer 
	\begin{equation*}
		F_{n}^{(r)}=\sum_{k=0}^{n}F_{k}^{(r-1)}
	\end{equation*}
with $F_{n}^{(0)}=F_{n}$, $F_{0}^{(r)}=0$, $F_{1}^{(r)}=1$. Moreover authors obtained some properties of these numbers.

The Fibonacci zeta functions are defined by
	\begin{equation*}
		\zeta _{F}(s)=\sum_{k=1}^{\infty }\dfrac{1}{F_{k}^{s}}\text{,}
	\end{equation*}
and several aspects of the function have been studied, in \cite{9,10}. In 1989 Andr\'{e}-Jeannin \cite{12} proved that 
\begin{equation*}
\zeta _{F}(1)=\sum_{k=1}^{\infty }\frac{1}{F_{k}}=3,3598856662...
\end{equation*}
 is irrational.   

In \cite{10}, Ohtsuka and Nakamura studied the partial infinite sums of reciprocals Fibonacci numbers and the reciprocal of the square of the Fibonacci numbers. Holiday and Komatsu generalized their results in \cite{9}.

The important point to note here is the form of finite sums of reciprocals Fibonacci numbers. In \cite{13}, Rabinowitz pointed out that
	\begin{equation*}
		\mathbb{F}_{n}=\sum_{k=1}^{n}\frac{1}{F_{k}}
	\end{equation*}
is no known simple form. Motivated by the above papers, in this work, we investigate harmonic and hyperharmonic Fibonacci numbers and give some combinatorial properties of them. Now we present some preliminaries to our study.

In \cite{1}, difference operator of $f(x)$ is defined as 
	\begin{equation*}
		\Delta f(x)=f(x+1)-f(x).
	\end{equation*}

The expression $x$ to the $m$ falling is denoted $x^{\underline{m}}$. The value of
	\begin{equation*}
		x^{\underline{m}}=x(x-1)(x-2)\ldots (x-m+1)
	\end{equation*}
and it is called falling power.

$\Delta $ operator has very interesting property for $m\geq 0$
	\begin{equation*}
		\Delta x^{\underline{m}}=mx^{\underline{m-1}} \text{ .}
	\end{equation*}

Analogously, $\Delta $ operator has an inverse, the anti-difference or summation operator $\sum $ defined as follows. If $\Delta f(x)=g(x)$ then
	\begin{equation*}
		\sum\limits_{a}^{b}g(x)\delta _{x}=\sum\limits_{x=a}^{b-1}g(x)=f(b)-f(a).
	\end{equation*}

Anti-difference operator $\sum $ has some properties as follows
	\begin{equation*}
		\sum x^{\underline{m}}\delta _{x}=\left\{ 
			\begin{array}{cc}
				\dfrac{\,x^{\underline{m+1}}}{m+1\,} & \ \text{if }m\neq -1, \\ 
				&  \\ 
				H_{x} & \ \text{if }m=-1,
			\end{array}
		\right. 
	\end{equation*}
and 
	\begin{equation}
		\sum_{a}^{b}u(x)\Delta v(x)\delta _{x}=\left. u(x)v(x)\right\vert_{a}^{b+1}-\sum\limits_{a}^{b}Ev(x)\Delta u(x)\delta _{x}  \label{0}
	\end{equation}
where $Ev(x)=v(x+1)$ \cite{1}.

In this study, we investigate finite sums of reciprocals of Fibonacci numbers called harmonic Fibonacci number. Then we examine some combinatorics properties of harmonic Fibonacci numbers via difference operator. We define hyperharmonic Fibonacci number and investigate some properties same as hyperharmonic numbers. Finally we obtain norms of  some circulant matrices involving these numbers. 
%%%%%%%%%%%%%%%%%%%%%%%%%%%%%%%%%%%%%%%%%%%%%%%%%%%%%%%%%%%%%%%%%%%%%%%
%%%%%%%%%%%%%%                                           %%%%%%%%%%%%%%
%%%%%%%%%%%%%%      Harmonic Fibonacci Numbers and       %%%%%%%%%%%%%%
%%%%%%%%%%%%%%       Some Combinatorial Properties       %%%%%%%%%%%%%%
%%%%%%%%%%%%%%                                           %%%%%%%%%%%%%%
%%%%%%%%%%%%%%%%%%%%%%%%%%%%%%%%%%%%%%%%%%%%%%%%%%%%%%%%%%%%%%%%%%%%%%%
\section{Harmonic Fibonacci Numbers and Some Combinatorial Properties}

In this section, we investigate various properties of $\mathbb{F}_{n}$, finite sum of the reciprocals of the Fibonacci numbers are defined as
	\begin{equation*}
		\mathbb{F}_{n}=\sum\limits_{k=1}^{n}\frac{1}{F_{k}}\text{.}
	\end{equation*}
Here and subsequently we call it $n^{\text{th}}$ harmonic Fibonacci number. Now we state some theorems related to harmonic Fibonacci numbers.
\begin{theorem} \label{theorem21}
Let $\mathbb{F}_{n}$ be $n^{\text{th}}$ harmonic Fibonacci number. Then we have 
	\begin{equation*}
		\sum_{k=0}^{n-1}\mathbb{F}_{k}=n\mathbb{F}_{n}-\sum_{k=0}^{n-1}\frac{k+1}{F_{k+1}}\text{.}
	\end{equation*}
\end{theorem}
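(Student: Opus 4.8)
The plan is to imitate the classical argument for $\sum_{k=1}^{n-1}H_{k}=nH_{n}-n$, using the summation-by-parts formula \eqref{0}. The key observation is that $\mathbb{F}_{x}$ plays exactly the role that $H_{x}$ plays in the harmonic case: since $\mathbb{F}_{x}=\sum_{j=1}^{x}\frac{1}{F_{j}}$ (with $\mathbb{F}_{0}=0$ the empty sum), its forward difference is $\Delta\mathbb{F}_{x}=\mathbb{F}_{x+1}-\mathbb{F}_{x}=\frac{1}{F_{x+1}}$. So the finite sum $\sum_{k=0}^{n-1}\mathbb{F}_{k}$ is an anti-difference sum that we can attack by parts.

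Concretely, I would apply \eqref{0} on $[a,b]=[0,n-1]$ with $u(x)=\mathbb{F}_{x}$ and $\Delta v(x)=1$, which forces $v(x)=x$ (so that $\sum x^{\underline{0}}\delta_{x}=\frac{x^{\underline{1}}}{1}=x$). Then the left-hand side is exactly $\sum_{x=0}^{n-1}\mathbb{F}_{x}$. On the right-hand side, the boundary term is $\left.x\,\mathbb{F}_{x}\right\vert_{0}^{n}=n\mathbb{F}_{n}-0$, where the lower endpoint vanishes both because $x=0$ and because $\mathbb{F}_{0}=0$. The remaining term is $\sum_{0}^{n-1}Ev(x)\,\Delta u(x)\,\delta_{x}=\sum_{x=0}^{n-1}(x+1)\cdot\frac{1}{F_{x+1}}$, using $Ev(x)=v(x+1)=x+1$ and $\Delta\mathbb{F}_{x}=\frac{1}{F_{x+1}}$. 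Combining, $\sum_{k=0}^{n-1}\mathbb{F}_{k}=n\mathbb{F}_{n}-\sum_{k=0}^{n-1}\frac{k+1}{F_{k+1}}$, which is the claim.

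As a sanity check (and an alternative route) one can instead interchange the order of summation directly: $\sum_{k=0}^{n-1}\mathbb{F}_{k}=\sum_{j=1}^{n-1}\frac{1}{F_{j}}\sum_{k=j}^{n-1}1=\sum_{j=1}^{n-1}\frac{n-j}{F_{j}}=n\mathbb{F}_{n-1}-\sum_{j=1}^{n-1}\frac{j}{F_{j}}$, and then one checks that $n\mathbb{F}_{n}-\sum_{k=0}^{n-1}\frac{k+1}{F_{k+1}}=n\mathbb{F}_{n}-\sum_{j=1}^{n}\frac{j}{F_{j}}$ reduces to the same expression after splitting off the $j=n$ term and using $\mathbb{F}_{n}-\frac{1}{F_{n}}=\mathbb{F}_{n-1}$; induction on $n$ works equally well. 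There is no serious obstacle here: the only points requiring care are the re-indexing $\sum_{k=0}^{n-1}\frac{k+1}{F_{k+1}}=\sum_{j=1}^{n}\frac{j}{F_{j}}$, the correct placement of the $E$-shift in \eqref{0}, and the vanishing of the lower boundary term. It is worth emphasizing in the write-up that the residual sum $\sum_{k=0}^{n-1}\frac{k+1}{F_{k+1}}$ cannot be collapsed further, since $\mathbb{F}_{n}$ itself has no known closed form (as noted in \cite{13}); the identity is genuinely the analogue of $\sum H_{k}=nH_{n}-n$ with the constant $n$ replaced by this weighted reciprocal sum.
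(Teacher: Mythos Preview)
Your proposal is correct and follows exactly the paper's own argument: the paper also applies the summation-by-parts formula \eqref{0} with $u(k)=\mathbb{F}_{k}$ and $\Delta v(k)=1$, obtaining $\Delta u(k)=\frac{1}{F_{k+1}}$, $v(k)=k$, $Ev(k)=k+1$, and reading off the identity. Your additional verification via interchanging the order of summation is a nice sanity check that the paper does not include, but the core method is identical.
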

\begin{proof}
Our proof starts with the observation that property of anti-difference operator. Let $u(k)=\mathbb{F}_{k}$ and $\Delta v(k)=1$ be in (\ref{0}). Then, we obtain $\Delta u(k)=\frac{1}{F_{k+1}}$, $v(k)=k$ \ and $Ev(k)=k+1.$ Hence, we have 
	\begin{equation*}
		\sum_{k=0}^{n-1}\mathbb{F}_{k}=n\mathbb{F}_{n}-\sum_{k=0}^{n-1}\frac{k+1}{F_{k+1}}\text{.}
	\end{equation*}
\end{proof}
\begin{theorem}
Let $\mathbb{F}_{n}$ be $n^{\text{th}}$ harmonic Fibonacci number. Then we have 
\begin{equation}
\sum_{k=0}^{n-1} \mathbb{F}_{k}^{2}=n\mathbb{F}_{n}^{2}-\sum\limits_{k=0}^{n-1}\frac{k+1}{F_{k+1}}\left(2\mathbb{F}_{k}+\frac{1}{F_{k+1}}\right) \text{.}  \label{norm1}
\end{equation}
\end{theorem}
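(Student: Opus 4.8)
The plan is to mimic the summation-by-parts argument used in Theorem~\ref{theorem21}, but now applied to the product $\mathbb{F}_k \cdot \mathbb{F}_k$. First I would recall that $\Delta \mathbb{F}_k = \mathbb{F}_{k+1} - \mathbb{F}_k = \frac{1}{F_{k+1}}$, so a natural choice in the abelian summation formula (\ref{0}) is $u(k) = \mathbb{F}_k$ and $\Delta v(k) = \mathbb{F}_{k+1}$, giving $v(k) = \mathbb{F}_k$ (since the anti-difference of $\mathbb{F}_{k+1}$ with respect to $k$ is $\mathbb{F}_k$). Then $Ev(k) = v(k+1) = \mathbb{F}_{k+1}$ and $\Delta u(k) = \frac{1}{F_{k+1}}$. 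Plugging into (\ref{0}) yields
\begin{equation*}
\sum_{0}^{n} \mathbb{F}_k \, \Delta\mathbb{F}_k \, \delta_k = \left. \mathbb{F}_k\mathbb{F}_k \right\vert_{0}^{n} - \sum_{0}^{n} \mathbb{F}_{k+1}\,\frac{1}{F_{k+1}}\,\delta_k,
\end{equation*}
i.e. $\sum_{k=0}^{n-1} \mathbb{F}_k \cdot \frac{1}{F_{k+1}} = \mathbb{F}_n^2 - \sum_{k=0}^{n-1} \frac{\mathbb{F}_{k+1}}{F_{k+1}}$.

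Next I would relate the left-hand side of this identity to $\sum \mathbb{F}_k^2$. Write $\mathbb{F}_{k+1} = \mathbb{F}_k + \frac{1}{F_{k+1}}$, so that $\frac{\mathbb{F}_{k+1}}{F_{k+1}} = \frac{\mathbb{F}_k}{F_{k+1}} + \frac{1}{F_{k+1}^2}$. Substituting this into the telescoped identity gives $\sum_{k=0}^{n-1} \frac{\mathbb{F}_k}{F_{k+1}} = \mathbb{F}_n^2 - \sum_{k=0}^{n-1}\frac{\mathbb{F}_k}{F_{k+1}} - \sum_{k=0}^{n-1}\frac{1}{F_{k+1}^2}$, hence $2\sum_{k=0}^{n-1}\frac{\mathbb{F}_k}{F_{k+1}} = \mathbb{F}_n^2 - \sum_{k=0}^{n-1}\frac{1}{F_{k+1}^2}$. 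This is the core telescoping relation; it contains $\mathbb{F}_n^2$, which is exactly the term we want on the right of (\ref{norm1}) with coefficient $n$, so a second application of summation-by-parts (or a direct Abel argument on $\sum \mathbb{F}_k^2$) must be used to bring in the factor $n$.

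For that I would apply (\ref{0}) again with $u(k) = \mathbb{F}_k^2$ and $\Delta v(k) = 1$, so $v(k) = k$, $Ev(k) = k+1$, and $\Delta u(k) = \mathbb{F}_{k+1}^2 - \mathbb{F}_k^2 = (2\mathbb{F}_k + \frac{1}{F_{k+1}})\frac{1}{F_{k+1}}$. This gives directly
\begin{equation*}
\sum_{k=0}^{n-1}\mathbb{F}_k^2 = n\mathbb{F}_n^2 - \sum_{k=0}^{n-1}(k+1)\,\frac{1}{F_{k+1}}\left(2\mathbb{F}_k + \frac{1}{F_{k+1}}\right),
\end{equation*}
which is precisely (\ref{norm1}). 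In fact this last route makes the first two paragraphs unnecessary as a logical matter — the cleanest proof is just the single summation-by-parts with $u(k)=\mathbb{F}_k^2$, $\Delta v(k)=1$, after noting the formula $\Delta\mathbb{F}_k^2 = \frac{1}{F_{k+1}}(2\mathbb{F}_k + \frac{1}{F_{k+1}})$, which follows from $\mathbb{F}_{k+1} = \mathbb{F}_k + \frac{1}{F_{k+1}}$ by squaring. The main (very minor) obstacle is simply getting the shifted-index bookkeeping right: the anti-difference convention sums from $0$ to $n-1$ while the evaluation bracket runs to $n$, and one must be careful that $\Delta\mathbb{F}_k$ at index $k$ is $\frac{1}{F_{k+1}}$ (not $\frac{1}{F_k}$). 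Everything else is routine algebra, so I would present the one-step proof and relegate the squaring identity $\Delta\mathbb{F}_k^2 = \frac{2\mathbb{F}_k}{F_{k+1}} + \frac{1}{F_{k+1}^2}$ to an inline computation.
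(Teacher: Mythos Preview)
Your final argument --- summation by parts with $u(k)=\mathbb{F}_k^2$, $\Delta v(k)=1$, together with the computation $\Delta\mathbb{F}_k^2=\frac{1}{F_{k+1}}\bigl(2\mathbb{F}_k+\frac{1}{F_{k+1}}\bigr)$ --- is exactly the paper's proof, and you correctly identify that the first two exploratory paragraphs are logically superfluous. (Minor slip in the exploratory part: you wrote ``$\Delta v(k)=\mathbb{F}_{k+1}$'' where you meant $\Delta v(k)=\Delta\mathbb{F}_k=\frac{1}{F_{k+1}}$, but your displayed equation there is nonetheless consistent with the intended choice $v(k)=\mathbb{F}_k$.)
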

\begin{proof}
Let $u(k)=\mathbb{F}_{k}^{2}$ and $\Delta v(k)=1$ be in (\ref{0}). Then by
using the equation (\ref{0}), we obtain 
\begin{equation*}
\sum_{k=0}^{n-1} \mathbb{F}_{k} ^{2}=n\mathbb{F}_{n}^{2}-\sum\limits_{k=0}^{n-1}\frac{k+1}{F_{k+1}}\left(2\mathbb{F}_{k}+\frac{1}{F_{k+1}}\right) \text{.}
\end{equation*}
\end{proof}
\begin{theorem}
Let $\mathbb{F}_{n}$ be $n^{\text{th}}$ harmonic Fibonacci number and $m$ is a nonnegative integer
	\begin{equation*}
		\sum_{k=0}^{n-1}\binom{k}{m}\mathbb{F}_{k}=\binom{n}{m+1}\mathbb{F}_{n}-\sum_{k=0}^{n-1}\binom{k+1}{m+1}\frac{1}{F_{k+1}}.
	\end{equation*}
\end{theorem}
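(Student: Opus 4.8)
The plan is to follow exactly the summation-by-parts scheme used in the proofs of Theorem~\ref{theorem21} and of the identity (\ref{norm1}), but now with the ``polynomial'' factor taken to be the binomial coefficient $\binom{k}{m}$ rather than the constant $1$. Accordingly, I would set $u(k)=\mathbb{F}_{k}$ and $\Delta v(k)=\binom{k}{m}$ in the summation-by-parts formula (\ref{0}).

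Before substituting, I would record the two elementary ingredients. From the definition $\mathbb{F}_{k}=\sum_{j=1}^{k}1/F_{j}$ we get $\Delta u(k)=\mathbb{F}_{k+1}-\mathbb{F}_{k}=\frac{1}{F_{k+1}}$, just as in the earlier proofs. For the anti-difference $v$, the right choice is $v(k)=\binom{k}{m+1}$: Pascal's rule gives $\Delta\binom{k}{m+1}=\binom{k+1}{m+1}-\binom{k}{m+1}=\binom{k}{m}$, so $v$ is indeed an anti-difference of $\Delta v$, and $Ev(k)=v(k+1)=\binom{k+1}{m+1}$.

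Now I would plug these into (\ref{0}). The boundary term $u(k)v(k)$ evaluated at the endpoints equals $\binom{n}{m+1}\mathbb{F}_{n}$ at the top and $\binom{0}{m+1}\mathbb{F}_{0}=0$ at the bottom (it vanishes both because $\mathbb{F}_{0}=0$ and because $\binom{0}{m+1}=0$ for $m\ge 0$), while the residual sum $\sum Ev(k)\,\Delta u(k)\,\delta_{k}$ becomes $\sum_{k=0}^{n-1}\binom{k+1}{m+1}\frac{1}{F_{k+1}}$. This gives precisely
\begin{equation*}
\sum_{k=0}^{n-1}\binom{k}{m}\mathbb{F}_{k}=\binom{n}{m+1}\mathbb{F}_{n}-\sum_{k=0}^{n-1}\binom{k+1}{m+1}\frac{1}{F_{k+1}}.
\end{equation*}

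I do not expect any genuine obstacle here: the only points requiring care are the bookkeeping of the summation limits in (\ref{0}) and the verification that $v(k)=\binom{k}{m+1}$ anti-differences $\binom{k}{m}$, which is just Pascal's identity. One could instead argue by induction on $n$, peeling off the $k=n-1$ term and using Pascal's rule to match both sides, but the summation-by-parts route is shorter and is uniform in $m$ (the degenerate ranges $m\ge n$ are handled automatically, both sides then being $0$). The statement is the Fibonacci counterpart of the classical identity $\sum_{k=1}^{n-1}\binom{k}{m}H_{k}=\binom{n}{m+1}\left(H_{n}-\frac{1}{m+1}\right)$ recalled in the introduction, and the proof mirrors it.
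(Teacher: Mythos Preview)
Your proof is correct and follows exactly the same route as the paper: set $u(k)=\mathbb{F}_{k}$ and $\Delta v(k)=\binom{k}{m}$ in the summation-by-parts formula (\ref{0}), identify $v(k)=\binom{k}{m+1}$ and $Ev(k)=\binom{k+1}{m+1}$, and read off the identity. Your write-up simply supplies more of the verification details (Pascal's rule for the anti-difference, vanishing of the lower boundary term) than the paper does.
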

\begin{proof}
Let $u(k)=\mathbb{F}_{k}$ and $\Delta v(k)=\binom{k}{m}$ be in (\ref{0}). Then, we obtain $\Delta u(k)=\frac{1}{F_{k+1}}$, $v(k)=\binom{k}{m+1}$ and $ Ev(k)=\binom{k+1}{m+1}.$ By using the equation (\ref{0}), we have
	\begin{equation*}
		\sum_{k=0}^{n-1}\binom{k}{m}\mathbb{F}_{k}=\binom{n}{m+1}\mathbb{F}_{n}-\sum_{k=0}^{n-1}\binom{k+1}{m+1}\frac{1}{F_{k+1}}.
	\end{equation*}
\end{proof}
\begin{theorem}
Let $\mathbb{F}_{n}$ be $n^{\text{th}}$ harmonic Fibonacci number. Then we have 
	\begin{equation*}
		\sum_{k=0}^{n-1}k^{\underline{m}}\,\mathbb{F}_{k}=\frac{n^{\underline{m+1}}}{m+1}\,\mathbb{F}_{n}-\sum_{k=0}^{n-1}\frac{(k+1)^{\underline{m+1}}}{m+1}\frac{1}{F_{k+1}}.
	\end{equation*}
\end{theorem}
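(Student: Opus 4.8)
The plan is to apply the summation-by-parts formula (\ref{0}) exactly as in the preceding three theorems, choosing $u(k)=\mathbb{F}_{k}$ and $\Delta v(k)=k^{\underline{m}}$. The first ingredient I would record is $\Delta u(k)=\mathbb{F}_{k+1}-\mathbb{F}_{k}=\frac{1}{F_{k+1}}$, which is immediate from the definition of the harmonic Fibonacci numbers. The second ingredient is an antidifference of $k^{\underline{m}}$: since $\Delta x^{\underline{m+1}}=(m+1)x^{\underline{m}}$, we may take $v(k)=\frac{k^{\underline{m+1}}}{m+1}$, and hence $Ev(k)=v(k+1)=\frac{(k+1)^{\underline{m+1}}}{m+1}$. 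These two computations are the only nonroutine inputs, and both are already spelled out among the difference-calculus preliminaries in the Introduction, so there is essentially no obstacle here; the ``hard part,'' such as it is, is merely keeping the index shift in $Ev$ straight.

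With these substitutions, formula (\ref{0}) with $a=0$, $b=n$ gives
	\begin{equation*}
		\sum_{k=0}^{n-1}k^{\underline{m}}\,\mathbb{F}_{k}=\left.\frac{k^{\underline{m+1}}}{m+1}\,\mathbb{F}_{k}\right\vert_{0}^{n}-\sum_{k=0}^{n-1}\frac{(k+1)^{\underline{m+1}}}{m+1}\cdot\frac{1}{F_{k+1}}.
	\end{equation*}
It remains to evaluate the boundary term. At the upper limit $k=n$ it contributes $\frac{n^{\underline{m+1}}}{m+1}\,\mathbb{F}_{n}$. At the lower limit $k=0$ the factor $0^{\underline{m+1}}=0(0-1)\cdots(0-m)$ vanishes because $m+1\geq 1$, so the $k=0$ contribution is $0$ (and in any case $\mathbb{F}_{0}=0$). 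Substituting this back yields precisely
	\begin{equation*}
		\sum_{k=0}^{n-1}k^{\underline{m}}\,\mathbb{F}_{k}=\frac{n^{\underline{m+1}}}{m+1}\,\mathbb{F}_{n}-\sum_{k=0}^{n-1}\frac{(k+1)^{\underline{m+1}}}{m+1}\frac{1}{F_{k+1}},
	\end{equation*}
which is the asserted identity. No induction or generating-function machinery is needed; the whole argument is a one-line application of summation by parts together with the evaluation of the falling-power antidifference, mirroring Theorems~\ref{theorem21} and the two that follow it, and I expect the write-up to be only a few lines long.
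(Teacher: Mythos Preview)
Your proposal is correct and follows exactly the same approach as the paper: both apply the summation-by-parts formula (\ref{0}) with $u(k)=\mathbb{F}_{k}$ and $\Delta v(k)=k^{\underline{m}}$, yielding $v(k)=\dfrac{k^{\underline{m+1}}}{m+1}$ and $Ev(k)=\dfrac{(k+1)^{\underline{m+1}}}{m+1}$. Your write-up is in fact slightly more detailed than the paper's, since you explicitly check that the lower boundary term vanishes.
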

\begin{proof}
Let $u(k)=\mathbb{F}_{k}$ and $\Delta v(k)=k^{\underline{m}}$ in (\ref{0}). Then, we get $\Delta u(k)=\dfrac{1}{F_{k+1}}$, \ \ $v(k)=\dfrac{k^{\underline{m+1}}}{m+1}$ and $Ev(k)=\dfrac{(k+1)^{\underline{m+1}}}{m+1}.$ By aid of the equation (\ref{0}), we have 
	\begin{equation*}
		\sum_{k=0}^{n-1}k^{\underline{m}}\,\mathbb{F}_{k}=\frac{n^{\underline{m+1}}}{m+1}\,\mathbb{F}_{n}-\sum_{k=0}^{n-1}\frac{(k+1)^{\underline{m+1}}}{m+1}\frac{1}{F_{k+1}}.
	\end{equation*}
\end{proof}

The following theorem gives the relationship between harmonic numbers and harmonic Fibonacci numbers.
\begin{theorem}\label{Theorem2}
Let $\mathbb{F}_{n}$ be $n^{\text{th}}$ harmonic Fibonacci number, then
	\begin{equation*}
		\sum_{k=0}^{n-1}\frac{\mathbb{F}_{k}}{k+1}=H_{n}\mathbb{F}_{n}-\sum_{k=0}^{n-1}\frac{H_{k+1}}{F_{k+1}}.
	\end{equation*}
\end{theorem}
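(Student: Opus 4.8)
The plan is to prove the identity by discrete summation by parts, i.e.\ by applying formula (\ref{0}) in exactly the same way as in the proof of Theorem~\ref{theorem21}. The only real difference from that argument is the choice of $\Delta v$: here I want $\Delta v(k)=\dfrac{1}{k+1}$, whose anti-difference is a harmonic number, and this is precisely what makes $H_{n}$ and the $H_{k+1}$ show up on the right-hand side.

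Concretely, I would set $u(k)=\mathbb{F}_{k}$ and $\Delta v(k)=\dfrac{1}{k+1}$ in (\ref{0}). Since $\mathbb{F}_{k+1}-\mathbb{F}_{k}=\dfrac{1}{F_{k+1}}$, this gives $\Delta u(k)=\dfrac{1}{F_{k+1}}$, exactly as in Theorem~\ref{theorem21}. For $v$, I would rewrite $\dfrac{1}{k+1}=k^{\underline{-1}}$ and invoke the anti-difference rule recalled in the Introduction, namely $\sum k^{\underline{-1}}\,\delta_{k}=H_{k}$ (the case $m=-1$), so that $v(k)=H_{k}$ and hence $Ev(k)=v(k+1)=H_{k+1}$.

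Substituting these data into (\ref{0}) over the range $k=0$ to $n$ then yields
\[
\sum_{k=0}^{n-1}\frac{\mathbb{F}_{k}}{k+1}=\bigl[\mathbb{F}_{k}H_{k}\bigr]_{0}^{n}-\sum_{k=0}^{n-1}\frac{H_{k+1}}{F_{k+1}}.
\]
The final step is to evaluate the boundary term: $\bigl[\mathbb{F}_{k}H_{k}\bigr]_{0}^{n}=\mathbb{F}_{n}H_{n}-\mathbb{F}_{0}H_{0}=H_{n}\mathbb{F}_{n}$, because $\mathbb{F}_{0}=0$ (empty sum) and $H_{0}=0$, so the contribution at the lower endpoint vanishes. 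This gives the claimed formula.

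I do not anticipate any genuine obstacle: the computation is a verbatim analogue of the earlier proofs, and the boundary term collapses cleanly at the lower limit. The one point worth flagging — and the single place where this statement really departs from Theorem~\ref{theorem21} — is recognizing that the anti-difference of $\dfrac{1}{k+1}$ is the harmonic number $H_{k}$ rather than a rational function; once that observation is in place, the rest is the same routine bookkeeping with formula (\ref{0}).
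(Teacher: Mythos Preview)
Your proposal is correct and follows exactly the same approach as the paper: set $u(k)=\mathbb{F}_{k}$ and $\Delta v(k)=\dfrac{1}{k+1}$ in (\ref{0}), identify $v(k)=H_{k}$ and $Ev(k)=H_{k+1}$, and read off the identity. The additional explanation you give (rewriting $\dfrac{1}{k+1}=k^{\underline{-1}}$ to justify $v(k)=H_{k}$, and checking the boundary term vanishes at $k=0$) is sound and simply makes explicit what the paper leaves implicit.
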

\begin{proof}
Let $u(k)=\mathbb{F}_{k}$ and $\Delta v(k)=\dfrac{1}{k+1}$ in (\ref{0}). Then, we obtain $\Delta u(k)=\dfrac{1}{F_{k+1}}$, $v(k)=H_{k}$ and $Ev(k)=H_{k+1}.$ By using the equation (\ref{0}), we have 
	\begin{equation*}
		\sum_{k=0}^{n-1}\frac{\mathbb{F}_{k}}{k+1}=H_{n}\mathbb{F}_{n}-\sum_{k=0}^{n-1}\frac{H_{k+1}}{F_{k+1}}.
	\end{equation*}
\end{proof}
\begin{corollary}
	\begin{equation*}
		\sum_{k=0}^{n-1}\dfrac{\mathbb{F}_{k}}{k+1}=\dfrac{{n+1 \brack 2}}{n!}\mathbb{F}_{n}-\sum_{k=0}^{n-1}\dfrac{{k+2 \brack 2}}{(k+1)!F_{k+1}} \text{.}
	\end{equation*}
\end{corollary}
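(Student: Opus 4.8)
The plan is to derive this corollary directly from Theorem \ref{Theorem2} by substituting the closed form of the harmonic numbers in terms of unsigned Stirling numbers of the first kind. Recall from the Introduction that
	\begin{equation*}
		H_{n}=\dfrac{{n+1 \brack 2}}{n!}.
	\end{equation*}
First I would record the shifted version of this identity, namely that replacing $n$ by $k+1$ gives $H_{k+1}=\dfrac{{k+2 \brack 2}}{(k+1)!}$; this is immediate once one checks the index bookkeeping $(k+1)+1 = k+2$ and $(k+1)! $ in the denominator.

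Next I would invoke Theorem \ref{Theorem2}, which asserts
	\begin{equation*}
		\sum_{k=0}^{n-1}\frac{\mathbb{F}_{k}}{k+1}=H_{n}\mathbb{F}_{n}-\sum_{k=0}^{n-1}\frac{H_{k+1}}{F_{k+1}},
	\end{equation*}
and substitute the two Stirling-number expressions: $H_n = {n+1 \brack 2}/n!$ into the first term on the right, and $H_{k+1} = {k+2 \brack 2}/((k+1)!)$ into the summand of the second term. This yields exactly
	\begin{equation*}
		\sum_{k=0}^{n-1}\dfrac{\mathbb{F}_{k}}{k+1}=\dfrac{{n+1 \brack 2}}{n!}\mathbb{F}_{n}-\sum_{k=0}^{n-1}\dfrac{{k+2 \brack 2}}{(k+1)!F_{k+1}},
	\end{equation*}
which is the claimed identity.

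There is essentially no obstacle here beyond careful index handling: the corollary is a cosmetic restatement of Theorem \ref{Theorem2} obtained by swapping each occurrence of a harmonic number for its Stirling-number representation. If one wanted a fully self-contained argument one could instead re-run the summation-by-parts computation of Theorem \ref{Theorem2} with $u(k)=\mathbb{F}_{k}$ and $\Delta v(k)=\tfrac{1}{k+1}$, noting that the anti-difference of $\tfrac{1}{k+1}$ equals $H_k = {k+1 \brack 2}/k!$, but substituting into the already-proved theorem is the shortest route.
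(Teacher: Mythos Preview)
Your proposal is correct and follows exactly the paper's approach: the paper's proof is a one-line remark that substituting $H_{n}=\dfrac{{n+1 \brack 2}}{n!}$ into Theorem~\ref{Theorem2} completes the proof, which is precisely what you do (with the additional explicit check of the shifted identity for $H_{k+1}$).
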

\begin{proof}
If we take $H_{n}=\dfrac{{n+1 \brack 2}}{n!}$ in Theorem \ref{Theorem2} the proof can be completed. 
\end{proof}
\begin{theorem}
We have 
	\begin{equation*}
		\sum_{k=0}^{n-1}F_{k-1}\mathbb{F}_{k}=F_{n}\mathbb{F}_{n}-n.
	\end{equation*}
\end{theorem}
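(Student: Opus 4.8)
The plan is to apply the summation-by-parts formula (\ref{0}) in the same spirit as the preceding theorems, with a judicious choice of $v$. I would take $u(k)=\mathbb{F}_{k}$, so that $\Delta u(k)=\mathbb{F}_{k+1}-\mathbb{F}_{k}=\dfrac{1}{F_{k+1}}$, and $\Delta v(k)=F_{k-1}$. The key observation is that the Fibonacci recurrence $F_{k+1}=F_{k}+F_{k-1}$ may be read as $F_{k+1}-F_{k}=F_{k-1}$, which is exactly the statement that $v(k)=F_{k}$ is an anti-difference of $F_{k-1}$; consequently $Ev(k)=v(k+1)=F_{k+1}$.

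Substituting these choices into (\ref{0}) with $a=0$ and $b=n$ gives
	\begin{equation*}
		\sum_{k=0}^{n-1}F_{k-1}\mathbb{F}_{k}=\left. \mathbb{F}_{k}F_{k}\right\vert_{0}^{n}-\sum_{k=0}^{n-1}F_{k+1}\cdot\frac{1}{F_{k+1}}.
	\end{equation*}
The boundary term equals $\mathbb{F}_{n}F_{n}-\mathbb{F}_{0}F_{0}=F_{n}\mathbb{F}_{n}$, since $F_{0}=0$ (and the empty sum $\mathbb{F}_{0}$ is $0$), while the residual sum collapses to $\sum_{k=0}^{n-1}1=n$. This is precisely the claimed identity $\sum_{k=0}^{n-1}F_{k-1}\mathbb{F}_{k}=F_{n}\mathbb{F}_{n}-n$.

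The only point that deserves a word of care is the term $k=0$, where $F_{k-1}=F_{-1}$: with the standard extension $F_{-1}=1$ this is consistent with $\Delta v(0)=F_{1}-F_{0}=1$, so no separate case analysis is needed. I do not expect any genuine obstacle here — the entire argument is a single application of (\ref{0}) once one recognizes that $v(k)=F_{k}$ telescopes $F_{k-1}$; the mild bookkeeping around the lower limit and the cancellation $F_{k+1}\cdot\frac{1}{F_{k+1}}=1$ is routine.
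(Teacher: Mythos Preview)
Your argument is correct and essentially identical to the paper's: the same choices $u(k)=\mathbb{F}_{k}$, $\Delta v(k)=F_{k-1}$, $v(k)=F_{k}$, $Ev(k)=F_{k+1}$ are plugged into (\ref{0}), and the residual sum $\sum_{k=0}^{n-1}F_{k+1}/F_{k+1}=n$ collapses in the same way. Your added remark on $F_{-1}=1$ and the vanishing of the lower boundary term is a welcome clarification that the paper omits.
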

\begin{proof}
Let $u(k)=\mathbb{F}_{k}$ and $\Delta v(k)=F_{k-1}$ in (\ref{0}). Then we obtain $\Delta u(k)=\dfrac{1}{F_{k+1}}$, $v(k)=F_{k}$ and $Ev(k)=F_{k+1}.$ By using the equation (\ref{0}), we have 
	\begin{eqnarray*}
		\sum_{k=0}^{n-1}F_{k-1}\mathbb{F}_{k} 
		&=& F_{n}\mathbb{F}_{n}-\sum_{k=0}^{n-1}k^{\underline{0}}\:\delta _{k} \\ 
		&=& F_{n}\mathbb{F}_{n}-n.
	\end{eqnarray*}
\end{proof}
\begin{theorem}
	Let $\mathbb{F}_{n}$ be $n^{\text{th}}$ harmonic Fibonacci number and $L_{n}$ is Lucas number. Then we have 
		\begin{equation*}
			\sum_{k=0}^{n-1}L_{k-1}\mathbb{F}_{k}=L_{n}\mathbb{F}_{n}-\sum_{k=0}^{n-1}\frac{L_{k+1}}{F_{k+1}}.
		\end{equation*}
\end{theorem}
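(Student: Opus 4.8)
The plan is to imitate the summation-by-parts argument used throughout this section, applying the identity (\ref{0}) to the pair $u(k)=\mathbb{F}_{k}$ and $\Delta v(k)=L_{k-1}$, chosen so that the sum $\sum_{k=0}^{n-1}L_{k-1}\mathbb{F}_{k}$ is exactly the left-hand side of (\ref{0}).

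First I would assemble the three ingredients (\ref{0}) needs. From the definition of the harmonic Fibonacci numbers, $\Delta u(k)=\mathbb{F}_{k+1}-\mathbb{F}_{k}=\frac{1}{F_{k+1}}$, just as in Theorem \ref{theorem21}. For an anti-difference of $L_{k-1}$, the Lucas recurrence $L_{k+1}=L_{k}+L_{k-1}$ rearranges to $L_{k+1}-L_{k}=L_{k-1}$; this says precisely that $v(k)=L_{k}$ satisfies $\Delta v(k)=L_{k-1}$, so that $Ev(k)=v(k+1)=L_{k+1}$.

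Next I would substitute into (\ref{0}) with $a=0$, $b=n$. The boundary term $\mathbb{F}_{k}L_{k}$ evaluated at the endpoints of the range contributes $\mathbb{F}_{n}L_{n}=L_{n}\mathbb{F}_{n}$, because $\mathbb{F}_{0}=\sum_{k=1}^{0}\frac{1}{F_{k}}=0$ kills the lower endpoint (and for the same reason the $k=0$ summand on the left, $L_{-1}\mathbb{F}_{0}$, is harmless). The remaining piece of (\ref{0}) is $-\sum_{k=0}^{n-1}Ev(k)\,\Delta u(k)\,\delta_{k}=-\sum_{k=0}^{n-1}\frac{L_{k+1}}{F_{k+1}}$, and combining the two pieces yields
\begin{equation*}
\sum_{k=0}^{n-1}L_{k-1}\mathbb{F}_{k}=L_{n}\mathbb{F}_{n}-\sum_{k=0}^{n-1}\frac{L_{k+1}}{F_{k+1}},
\end{equation*}
which is the asserted identity. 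There is no genuine obstacle here; the only step that calls for a little care is confirming, via the Lucas recurrence, that $L_{k}$ (rather than a shifted Lucas number) is the correct anti-difference of $L_{k-1}$ — the analogue of taking $v(k)=F_{k}$ in the preceding Fibonacci theorem — together with noting that the lower boundary term drops out because $\mathbb{F}_{0}=0$. A quick check at $n=1,2,3$ confirms the formula.
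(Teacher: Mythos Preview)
Your proof is correct and is exactly the approach the paper intends: the paper's own proof reads simply ``The proof may be handled in much the same way,'' referring to the summation-by-parts computations in the preceding theorems, and you have carried out precisely that computation with $u(k)=\mathbb{F}_{k}$, $v(k)=L_{k}$. The only point worth noting is that your verification that $\Delta L_{k}=L_{k-1}$ via the Lucas recurrence, and your observation that $\mathbb{F}_{0}=0$ kills both the lower boundary term and the potentially awkward $L_{-1}\mathbb{F}_{0}$ summand, are exactly the small checks the paper leaves implicit.
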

\begin{proof}
	The proof may be handled in much the same way.
\end{proof}
%%%%%%%%%%%%%%%%%%%%%%%%%%%%%%%%%%%%%%%%%%%%%%%%%%%%%%%%%%%%%%%%%%%%%%%
%%%%%%%%%%%%%%                                           %%%%%%%%%%%%%%
%%%%%%%%%%%%%%      Hyperharmonic Fibonacci Numbers      %%%%%%%%%%%%%%
%%%%%%%%%%%%%%                                           %%%%%%%%%%%%%%
%%%%%%%%%%%%%%%%%%%%%%%%%%%%%%%%%%%%%%%%%%%%%%%%%%%%%%%%%%%%%%%%%%%%%%%
\section{Hyperharmonic Fibonacci Numbers}

In this section, hyperharmonic Fibonacci numbers will be defined having used a similar of affiliation between harmonic numbers and hyperharmonic numbers by the help of harmonic Fibonacci numbers. Now we define $\mathbb{F} _{n}^{(r)}$, the hyperharmonic Fibonacci numbers of order $r$.

\begin{definition}
	Let $\mathbb{F}_{n}$ be $n^{\text{th}}$ harmonic Fibonacci number. For $n,r\geq 1$ Hyperharmonic Fibonacci numbers are defined by
		\begin{equation}	 
			\mathbb{F}_{n}^{(r)}=\sum\limits_{k=1}^{n}\mathbb{F}_{k}^{(r-1)}  \label{1}
		\end{equation}
	with $\mathbb{F}_{n}^{(0)}=\dfrac{1}{F_{n}}$ and $\mathbb{F}_{0}^{(k)}=0$ for $k\geq 0$.
\end{definition}

In particular for $r=1$ we get
	\begin{equation}
		\mathbb{F}_{n}^{(1)}=\mathbb{F}_{n}=\sum\limits_{k=1}^{n}\frac{1}{F_{k}} \label{2}
	\end{equation}
where $\mathbb{F}_{n}$ is $n^{\text{th}}$ harmonic Fibonacci numbers. 

\begin{center}
%\begin{table}[h!]
		\begin{tabular}{l|l c c c c}
			\hline
				$n$ & $1$ & $2$ & $3$ & $4$ & $5$ \\ \hline \smallskip
				$\mathbb{F}_{n}^{(1)}$ & $1$ & $2$ & $\frac{5}{2}$ & $\frac{17}{6}$ & $\frac{91}{30}$ \smallskip \\ 
				$\mathbb{F}_{n}^{(2)}$ & $1$ & $3$ & $\frac{11}{2}$ & $\frac{25}{3}$ & $\frac{341}{30}$ \smallskip  \\ 
				$\mathbb{F}_{n}^{(3)}$ & $1$ & $4$ & $\frac{19}{2}$ & $\frac{107}{6}$ & $\frac{146}{5}$ \smallskip \\ 
				$\mathbb{F}_{n}^{(4)}$ & $1$ & $5$ & $\frac{29}{2}$ & $\frac{97}{3}$ & $\frac{923}{15}$ \smallskip \\ \hline
		\end{tabular} \medskip \\
		\textbf{Table 1. Some Hyperharmonic Fibonacci numbers.}
%	\end{table}
\end{center}
\begin{lemma}
	Hyperharmonic Fibonacci numbers have the recurrence relation as follows:
		\begin{equation*}
			\mathbb{F}_{n}^{(r)}=\mathbb{F}_{n}^{(r-1)}+\mathbb{F}_{n-1}^{(r)}\text{.}
		\end{equation*}
\end{lemma}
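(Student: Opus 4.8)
The plan is to prove the recurrence $\mathbb{F}_{n}^{(r)}=\mathbb{F}_{n}^{(r-1)}+\mathbb{F}_{n-1}^{(r)}$ directly from the defining sum (\ref{1}), which is itself nothing but a telescoping identity for partial sums. Starting from $\mathbb{F}_{n}^{(r)}=\sum_{k=1}^{n}\mathbb{F}_{k}^{(r-1)}$, I would split off the last term of the sum:
\begin{equation*}
\mathbb{F}_{n}^{(r)}=\sum_{k=1}^{n}\mathbb{F}_{k}^{(r-1)}=\mathbb{F}_{n}^{(r-1)}+\sum_{k=1}^{n-1}\mathbb{F}_{k}^{(r-1)}.
\end{equation*}
By the definition (\ref{1}) applied with $n$ replaced by $n-1$, the remaining sum $\sum_{k=1}^{n-1}\mathbb{F}_{k}^{(r-1)}$ is exactly $\mathbb{F}_{n-1}^{(r)}$, which yields the claim immediately.

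The only thing requiring a word of care is the boundary case $n=1$, where the ``remaining sum'' is empty. Here I would invoke the convention $\mathbb{F}_{0}^{(k)}=0$ for all $k\geq 0$ built into the definition, so that $\mathbb{F}_{0}^{(r)}=0=\sum_{k=1}^{0}\mathbb{F}_{k}^{(r-1)}$ and the identity $\mathbb{F}_{1}^{(r)}=\mathbb{F}_{1}^{(r-1)}+\mathbb{F}_{0}^{(r)}$ reduces to $\mathbb{F}_{1}^{(r)}=\mathbb{F}_{1}^{(r-1)}$, which is consistent with (\ref{1}). One should also note that the recurrence is stated for $r\geq 1$ (and $n\geq 1$), so $\mathbb{F}_{k}^{(r-1)}$ is always defined, either as a previously defined hyperharmonic Fibonacci number (when $r-1\geq 1$) or as $1/F_{k}$ (when $r=1$), the latter matching (\ref{2}).

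I do not anticipate any genuine obstacle here: this lemma is the analogue of the elementary recurrence $H_{n}^{(r)}=H_{n}^{(r-1)}+H_{n-1}^{(r)}$ for ordinary hyperharmonic numbers, and the proof is a one-line manipulation of a finite sum. If a sanity check is wanted, one can verify the recurrence against Table 1, e.g. $\mathbb{F}_{3}^{(2)}=\frac{11}{2}=\frac{5}{2}+3=\mathbb{F}_{3}^{(1)}+\mathbb{F}_{2}^{(2)}$. The main value of stating it explicitly is that it exhibits the Pascal-type triangular structure of the array $\big(\mathbb{F}_{n}^{(r)}\big)$, which will be the workhorse for deriving the closed forms and binomial-coefficient expansions in the results that follow.
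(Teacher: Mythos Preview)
Your proof is correct and follows exactly the same approach as the paper: split off the last term of the defining sum $\mathbb{F}_{n}^{(r)}=\sum_{k=1}^{n}\mathbb{F}_{k}^{(r-1)}$ and recognize the remaining partial sum as $\mathbb{F}_{n-1}^{(r)}$. Your additional remarks on the boundary case $n=1$ and the sanity check against Table~1 are fine but go beyond what the paper includes.
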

\begin{proof}
	From equation (\ref{1}) we have 
		\begin{eqnarray*}
			\mathbb{F}_{n}^{(r)} &=&\sum\limits_{k=1}^{n}\mathbb{F}_{k}^{(r-1)} \\
			&=&\sum\limits_{k=1}^{n-1}\mathbb{F}_{k}^{(r-1)}+\mathbb{F}_{n}^{(r-1)} \\
			&=&\mathbb{F}_{n-1}^{(r)}+\mathbb{F}_{n}^{(r-1)}.
		\end{eqnarray*}
\end{proof}
We give an interesting property of hyperharmonic numbers same as in (\ref{har0}).
\begin{theorem}\label{Theorem3}
	For $1\leq i,j\leq n$, we have
		\begin{equation*}
			\mathbb{F}_{n-i+1}^{(j)}=\sum_{k=i}^{n}\binom{n-k+j-1}{j-1} \frac{1}{F_{k-i+1}}.
		\end{equation*}
\end{theorem}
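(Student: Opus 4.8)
The plan is to prove first the cleaner equivalent statement
$$\mathbb{F}_{n}^{(r)}=\sum_{t=1}^{n}\binom{n-t+r-1}{r-1}\,\frac{1}{F_{t}},$$
which is the exact Fibonacci analogue of (\ref{har0}), and then to read off Theorem \ref{Theorem3} from it. Indeed, replacing $n$ by $n-i+1$ and $r$ by $j$ and then substituting $t=k-i+1$ turns the summation range $t=1,\dots,n-i+1$ into $k=i,\dots,n$ and turns $(n-i+1)-t+r-1$ into $n-k+j-1$, so the two identities coincide; I would present this index translation as a short opening remark.

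To establish the cleaner formula I would induct on $r$. For the base case $r=1$ the right-hand side collapses to $\sum_{t=1}^{n}\binom{n-t}{0}\frac{1}{F_{t}}=\sum_{t=1}^{n}\frac{1}{F_{t}}$, which is $\mathbb{F}_{n}^{(1)}$ by (\ref{2}). For the inductive step, fix $r\ge 2$ and assume the formula for $r-1$. Applying the defining recurrence (\ref{1}) and the induction hypothesis, then interchanging the two finite summations, gives
$$\mathbb{F}_{n}^{(r)}=\sum_{m=1}^{n}\mathbb{F}_{m}^{(r-1)}=\sum_{m=1}^{n}\sum_{t=1}^{m}\binom{m-t+r-2}{r-2}\frac{1}{F_{t}}=\sum_{t=1}^{n}\frac{1}{F_{t}}\sum_{m=t}^{n}\binom{m-t+r-2}{r-2}.$$
The inner sum, after the shift $s=m-t$, becomes $\sum_{s=0}^{n-t}\binom{s+r-2}{r-2}$, which by the hockey-stick identity $\sum_{s=0}^{N}\binom{s+k}{k}=\binom{N+k+1}{k+1}$ equals $\binom{n-t+r-1}{r-1}$; substituting this back yields the claimed formula for $r$ and closes the induction.

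This argument is routine, so the only thing that needs attention is index bookkeeping: getting the double substitution in the opening reduction right, performing the shift inside the hockey-stick sum correctly, and anchoring the induction at $r=1$ rather than $r=0$ so as to avoid having to interpret the ambiguous coefficient $\binom{n-t-1}{-1}$ coming from $\mathbb{F}_{n}^{(0)}=1/F_{n}$. I expect this mild bookkeeping to be the main obstacle; there is no analytic or combinatorial difficulty beyond the standard hockey-stick summation.
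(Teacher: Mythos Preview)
Your argument is correct, but it follows a different route from the paper's own proof. The paper unfolds the definition all the way down to a $j$-fold nested sum $\sum_{k_j}\sum_{k_{j-1}}\cdots\sum_{k_1}\frac{1}{F_{k_1}}$ and then inducts on $n$, repeatedly splitting off the last term of the outermost sum until a telescoping column of binomial coefficients appears, which is then collapsed via the hockey-stick identity. You instead first strip away the offset $i$ to reduce to the clean identity $\mathbb{F}_n^{(r)}=\sum_{t=1}^n\binom{n-t+r-1}{r-1}\frac{1}{F_t}$ and then induct on the order $r$, using the one-step recurrence (\ref{1}), a single interchange of two finite sums, and one application of hockey-stick. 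Your version is shorter and avoids manipulating the $j$-fold nested sum explicitly; the paper's induction on $n$ is more laborious but stays closer to the original indexing. Incidentally, your argument is essentially the $s=0$ case of the paper's later Theorem on $\mathbb{F}_n^{(r+s)}$, so you have in effect discovered that the corollary following Theorem~\ref{Theorem3} can be proved directly without passing through the more general index form.
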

\begin{proof}
We begin by recalling the definition of $\mathbb{F}_{n}^{(r)}$. If we use this definition $j-1$ times, we get
	\begin{eqnarray*}
		\mathbb{F}_{n-i+1}^{(j)} &=&\sum\limits_{k=1}^{n-i+1}\mathbb{F}_{k}^{(j-1)}\\ &=&\sum_{k_{j}=1}^{n-i+1}\sum_{k_{j-1}=1}^{k_{j}}\ldots\sum_{k_{1}=1}^{k_{2}}\frac{1}{F_{k_{1}}}\text{.}
	\end{eqnarray*}
We use induction on $n$ to obtain
	\begin{equation*}
		\sum_{k_{j}=1}^{n-i+1}\sum_{k_{j-1}=1}^{k_{j}}\ldots \sum_{k_{1}=1}^{k_{2}} \frac{1}{F_{k_{1}}}=\sum_{k=i}^{n}\binom{n-k+j-1}{j-1}\frac{1}{F_{k-i+1}}.
	\end{equation*}
Clearly it is true for $n=1$. Suppose it is true for some $n>1$, then using the induction hypothesis, we have
	\begin{eqnarray*}
		\sum_{k_{j}=1}^{n-i+2}\sum_{k_{j-1}=1}^{k_{j}}\ldots \sum_{k_{1}=1}^{k_{2}} \frac{1}{F_{k_{1}}} &=&\sum_{k_{j}=1}^{n-i+1}\sum_{k_{j-1}=1}^{k_{j}}\ldots \sum_{k_{1}=1}^{k_{2}}\frac{1}{F_{k_{1}}}+\sum_{k_{j-1}=1}^{n-i+2} \sum_{k_{j-2}=1}^{k_{j-1}}\ldots \sum_{k_{1}=1}^{k_{2}}\frac{1}{F_{k_{1}}} \\ &=&\sum_{k_{j}=1}^{n-i+1}\sum_{k_{j-1}=1}^{k_{j}}\ldots \sum_{k_{1}=1}^{k_{2}}\frac{1}{F_{k_{1}}}+\sum_{k_{j-1}=1}^{n-i+1} \sum_{k_{j-2}=1}^{k_{j-1}}\ldots \sum_{k_{1}=1}^{k_{2}}\frac{1}{F_{k_{1}}} \\ &&+\sum_{k_{j-2}=1}^{n-i+2}\sum_{k_{j-3}=1}^{k_{j-2}}\ldots \sum_{k_{1}=1}^{k_{2}}\frac{1}{F_{k_{1}}} \\ &=&\sum_{k_{j}=1}^{n-i+1}\sum_{k_{j-1}=1}^{k_{j}}\ldots \sum_{k_{1}=1}^{k_{2}}\frac{1}{F_{k_{1}}}+\sum_{k_{j-1}=1}^{n-i+1} \sum_{k_{j-2}=1}^{k_{j-1}}\ldots \sum_{k_{1}=1}^{k_{2}}\frac{1}{F_{k_{1}}} +\cdots  \\ &&+\sum\limits_{k_{2}=0}^{n-i+1}\sum\limits_{k_{1}=0}^{k_{2}}\frac{1}{F_{k_{1}}}+\sum\limits_{k_{1}=0}^{n-i+2}\frac{1}{F_{k_{1}}} \\ &=&\sum_{k_{j}=1}^{n-i+1}\sum_{k_{j-1}=1}^{k_{j}}\ldots \sum_{k_{1}=1}^{k_{2}}\frac{1}{F_{k_{1}}}+\sum_{k_{j-1}=1}^{n-i+1} \sum_{k_{j-2}=1}^{k_{j-1}}\ldots \sum_{k_{1}=1}^{k_{2}}\frac{1}{F_{k_{1}}} +\cdots  \\ &&+\sum\limits_{k_{2}=0}^{n-i+1}\sum\limits_{k_{1}=0}^{k_{2}}\frac{1}{F_{k_{1}}}+\sum\limits_{k_{1}=0}^{n-i+1}\frac{1}{F_{k_{1}}}+\frac{1}{F_{n-i+2}} \\ &=&\sum_{k=i}^{n}\frac{1}{F_{k-i+1}}\left[ \binom{n-k+j-1}{j-1}+\binom{n-k+j-2}{j-2}+\cdots +\binom{n-k}{0}\right]  \\ &&+\frac{1}{F_{n-i+2}} \\ &=&\sum_{k=i}^{n}\frac{1}{F_{k-i+1}}\binom{n-k+j}{j-1}+\frac{1}{F_{n-i+2}} \\ &=&\sum_{k=i}^{n+1}\binom{n-k+j}{j-1}\frac{1}{F_{k-i+1}}.
	\end{eqnarray*}
Finally we obtain
	\begin{equation*}
		\mathbb{F}_{n-i+1}^{(j)}=\sum_{k=i}^{n}\binom{n-k+j-1}{j-1}\frac{1}{F_{k-i+1}}.
	\end{equation*}
\end{proof}
\begin{corollary}
We have 
	\begin{equation*}
		\mathbb{F}_{n}^{(r)}=\sum\limits_{k=1}^{n}\binom{n-k+r-1}{r-1}\frac{1}{F_{k}}\text{.}
	\end{equation*}
\end{corollary}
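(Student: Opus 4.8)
The plan is to obtain this identity as the $i=1$ specialization of Theorem~\ref{Theorem3}. First I would set $i=1$ and $j=r$ in
\[
\mathbb{F}_{n-i+1}^{(j)}=\sum_{k=i}^{n}\binom{n-k+j-1}{j-1}\frac{1}{F_{k-i+1}}.
\]
With $i=1$ one has $n-i+1=n$, so the left-hand side becomes $\mathbb{F}_{n}^{(r)}$; the lower summation index becomes $k=1$; and each factor $\frac{1}{F_{k-i+1}}$ becomes $\frac{1}{F_{k}}$. Hence the right-hand side is exactly $\sum_{k=1}^{n}\binom{n-k+r-1}{r-1}\frac{1}{F_{k}}$, which is the claimed formula. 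No further computation is needed.

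The only point requiring a little care is that Theorem~\ref{Theorem3} is stated under the hypothesis $1\le i,j\le n$, so the substitution above directly covers the range $1\le r\le n$. To remove the restriction and treat arbitrary $r\ge 1$, I would instead argue by induction on $r$: the base case $r=1$ is the definition in (\ref{2}), and the inductive step uses the recurrence $\mathbb{F}_{n}^{(r)}=\mathbb{F}_{n}^{(r-1)}+\mathbb{F}_{n-1}^{(r)}$ established in the preceding Lemma, matched against the Pascal-type identity $\binom{n-k+r-1}{r-1}=\binom{n-k+r-2}{r-2}+\binom{n-k+r-2}{r-1}$ term by term, together with the boundary conventions $\mathbb{F}_{0}^{(k)}=0$.

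Since the derivation is essentially a substitution, there is no genuine obstacle. The only thing to watch is keeping the index shifts $n-i+1$ and $F_{k-i+1}$ and the summation bounds consistent when setting $i=1$, and (if one wants the statement for all $r$) checking the extreme terms $k=1$ and $k=n$ in the Pascal-identity bookkeeping of the inductive step.
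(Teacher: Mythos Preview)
Your approach is essentially identical to the paper's: the paper's proof consists solely of the sentence ``By putting $i=1$ and $j=r$ in Theorem~\ref{Theorem3}, one has $\mathbb{F}_{n}^{(r)}=\sum_{k=1}^{n}\binom{n-k+r-1}{r-1}\frac{1}{F_{k}}$.'' Your additional care in noting the constraint $1\le j\le n$ from the theorem's hypothesis and sketching an inductive extension to all $r\ge 1$ goes beyond what the paper does, but this is a refinement rather than a different route.
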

\begin{proof}
By putting $i=1$ and $j=r$ in Theorem \ref{Theorem3}, one has
	\begin{equation*}
		\mathbb{F}_{n}^{(r)}=\sum\limits_{k=1}^{n}\binom{n-k+r-1}{r-1}\frac{1}{F_{k}}\text{.}
	\end{equation*}
\end{proof}

At this point, we express $\mathbb{F}_{n}^{(r+s)}$ in terms of $\mathbb{F}_{1}^{(s)},\mathbb{F}_{2}^{(s)},\ldots ,\mathbb{F}_{n}^{(s)}$ with following the theorem.
\begin{theorem}
	For $r\geq 1$ and $s\geq 0$ we have
		\begin{equation}
			\mathbb{F}_{n}^{(r+s)}=\sum\limits_{t=1}^{n}\binom{n-t+r-1}{r-1}\mathbb{F}_{t}^{(s)}  \label{h}
		\end{equation}
\end{theorem}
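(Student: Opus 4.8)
The plan is to prove \eqref{h} by induction on $r$, using the recurrence for hyperharmonic Fibonacci numbers together with a Vandermonde-type convolution identity for binomial coefficients. First I would establish the base case $r=1$: here $\binom{n-t+r-1}{r-1}=\binom{n-t}{0}=1$ for all $1\le t\le n$, so the claimed identity reads $\mathbb{F}_{n}^{(1+s)}=\sum_{t=1}^{n}\mathbb{F}_{t}^{(s)}$, which is exactly the defining relation \eqref{1} with $r$ there replaced by $s+1$. So the base case is immediate.

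For the inductive step, assume \eqref{h} holds for some $r\ge 1$ (and all $n$, all $s\ge 0$). Using the recurrence $\mathbb{F}_{n}^{(r+1+s)}=\mathbb{F}_{n}^{(r+s)}+\mathbb{F}_{n-1}^{(r+1+s)}$ and iterating it down to $\mathbb{F}_{0}^{(r+1+s)}=0$, I would write
\begin{equation*}
\mathbb{F}_{n}^{(r+1+s)}=\sum_{u=1}^{n}\mathbb{F}_{u}^{(r+s)}.
\end{equation*}
Then I substitute the induction hypothesis for each $\mathbb{F}_{u}^{(r+s)}$ and interchange the order of summation:
\begin{equation*}
\mathbb{F}_{n}^{(r+1+s)}=\sum_{u=1}^{n}\sum_{t=1}^{u}\binom{u-t+r-1}{r-1}\mathbb{F}_{t}^{(s)}=\sum_{t=1}^{n}\mathbb{F}_{t}^{(s)}\sum_{u=t}^{n}\binom{u-t+r-1}{r-1}.
\end{equation*}
The remaining task is to evaluate the inner sum, and this is the only real computation: shifting $v=u-t$ gives $\sum_{v=0}^{n-t}\binom{v+r-1}{r-1}$, which by the hockey-stick (parallel summation) identity equals $\binom{n-t+r}{r}$. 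Plugging this in yields $\mathbb{F}_{n}^{(r+1+s)}=\sum_{t=1}^{n}\binom{n-t+r}{r}\mathbb{F}_{t}^{(s)}$, which is precisely \eqref{h} with $r$ replaced by $r+1$, completing the induction.

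The step I expect to be the mildest obstacle — really just a matter of care rather than difficulty — is justifying the telescoping of the recurrence down to $\mathbb{F}_{0}^{(r+1+s)}=0$ and getting the summation bounds right after the interchange; one must be careful that the convention $\mathbb{F}_{0}^{(k)}=0$ is what makes the collapsed sum start at $u=1$. An alternative, perhaps cleaner, route avoids induction on $r$ entirely: substitute the closed form $\mathbb{F}_{t}^{(s)}=\sum_{k=1}^{t}\binom{t-k+s-1}{s-1}\frac{1}{F_{k}}$ from the Corollary into the right-hand side of \eqref{h}, interchange sums, and reduce to the single binomial convolution $\sum_{t=k}^{n}\binom{n-t+r-1}{r-1}\binom{t-k+s-1}{s-1}=\binom{n-k+r+s-1}{r+s-1}$ (Vandermonde), whose right-hand side together with the Corollary (now with order $r+s$) gives $\mathbb{F}_{n}^{(r+s)}$. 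Either way the combinatorial kernel is a standard binomial identity, so no genuine obstruction arises; I would present the inductive proof as the primary argument since it uses only the recurrence lemma proved just above.
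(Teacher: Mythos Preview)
Your argument is correct. The induction on $r$ works cleanly: the base case is literally the definition \eqref{1}, and in the inductive step the ``iteration of the recurrence down to $\mathbb{F}_{0}^{(r+1+s)}=0$'' is again nothing more than the definition $\mathbb{F}_{n}^{(r+1+s)}=\sum_{u=1}^{n}\mathbb{F}_{u}^{(r+s)}$, so there is no subtlety at the lower limit. After interchanging sums the hockey-stick identity finishes it exactly as you say.

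The paper argues differently. Its primary proof inducts on $n$ rather than on $r$: it telescopes the recurrence of Lemma~1 in the \emph{order} index, writing $\mathbb{F}_{n+1}^{(r+s)}=\mathbb{F}_{n+1}^{(s)}+\mathbb{F}_{n}^{(s+1)}+\cdots+\mathbb{F}_{n}^{(r+s)}$, then applies the induction hypothesis at level $n$ to each term and collapses the resulting column of binomials $\binom{n-t}{0}+\binom{n-t+1}{1}+\cdots+\binom{n-t+r-1}{r-1}=\binom{n-t+r}{r-1}$ via hockey-stick. So both proofs hinge on the same binomial identity, but you sum it along the upper index while the paper sums it along the lower index; your route is arguably tidier because the base case is free and the inductive step uses the hypothesis for a single value of $r$, whereas the paper's induction on $n$ tacitly needs the hypothesis for all orders $1,\ldots,r$ simultaneously. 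The paper also supplies a second, matrix-based proof (realizing the map $\mathbb{F}^{(s)}\mapsto\mathbb{F}^{(r+s)}$ as multiplication by the $r$th power of the all-ones upper triangular matrix), which is closer in spirit to your alternative Vandermonde-convolution remark.
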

\begin{proof}
We prove this by induction on $n$. Clearly it is true for $n=1$. Assuming (\ref{h}) to hold for $n>1$, we will prove it for $n+1$. Thus,
	\begin{eqnarray*}
		\mathbb{F}_{n+1}^{(r+s)} &=&\mathbb{F}_{n+1}^{(r+s-1)}+\mathbb{F}_{n}^{(r+s)} \\ &=&\mathbb{F}_{n+1}^{(r+s-2)}+\mathbb{F}_{n}^{(r+s-1)}+\mathbb{F}_{n}^{(r+s)} \\ &&\vdots  \\
		&=&\mathbb{F}_{n+1}^{(s)}+\mathbb{F}_{n}^{(s+1)}+\cdots +\mathbb{F}_{n}^{(r+s-1)}+\mathbb{F}_{n}^{(r+s)} \\ &=&\mathbb{F}_{n+1}^{(s)}+\sum\limits_{t=1}^{n}\left[ \binom{n-t}{0}+\binom{n-t+1}{1}+\cdots +\binom{n-r-t-1}{r-1}\right] \mathbb{F}_{t}^{(s)} \\ &=&\mathbb{F}_{n+1}^{(s)}+\sum\limits_{t=1}^{n}\binom{n-t+r}{r-1}\mathbb{F}_{t}^{(s)} \\
		&=&\sum\limits_{t=1}^{n+1}\binom{n-t+r}{r-1}\mathbb{F}_{t}^{(s)}.
	\end{eqnarray*}
Thus the proof is completed by the mathematical induction.
\end{proof}
\begin{proof}[Another Proof.]
Let $C_{n}^{(r)}$ be $n\times n$ matrix which is defined by
	\begin{equation*}
		C_{n}^{(r)}=
			\begin{pmatrix}
				\mathbb{F}_{n}^{(r)} & \mathbb{F}_{n}^{(r+1)} & ... & \mathbb{F}_{n}^{(r+n-1)} \\ 
				\mathbb{F}_{n-1}^{(r)} & \mathbb{F}_{n-1}^{(r+1)} & ... & \mathbb{F}_{n-1}^{(r+n-1)} \\ 
				\vdots  & \vdots  & \cdots  & \vdots  \\ 
				\mathbb{F}_{1}^{(r)} & \mathbb{F}_{1}^{(r+1)} & ... & \mathbb{F}_{1}^{(r+n-1)}
			\end{pmatrix}
		,
	\end{equation*}
where $\mathbb{F}_{n}^{(r)}$ is the $n^{\text{th}}$ hyperharmonic Fibonacci number.
Let
	\begin{equation*}
		A=
			\begin{pmatrix}
				1 & 1 & \cdots  & 1 \\ 
				0 & 1 & \cdots  & 1 \\ 
				\vdots  & \vdots  & \ddots  & \vdots  \\ 
				0 & 0 & \cdots  & 1
			\end{pmatrix}
		,
	\end{equation*}
be $n\times n$ upper triangle matrix. In \cite{3}, Bahsi and Solak show that $A^{r}=(b_{ij})_{n\times n},$ where
	\begin{equation*}
		b_{ij}=\left\{ 
			\begin{array}{cc} 
				\binom{j-i+r-1}{r-1} & \text{if }i\leq j, \\ 
				0 & \text{otherwise.}
			\end{array}
		\right. 
	\end{equation*}
From the matrix multiplication it is obtained 
	\begin{equation*}
		C_{n}^{(r+s)}=A^{r}C_{n}^{(s)}\text{.}
	\end{equation*}
Hence the element of $\left( C_{n}^{(r+s)}\right) _{11}$ is
	\begin{eqnarray*}
		\mathbb{F}_{n}^{(r+s)} &=&\sum\limits_{j=1}^{n}b_{1j}\mathbb{F}_{n-j+1}^{(s)} \\
		&=&\sum\limits_{j=1}^{n}\binom{j+r-2}{r-1}\mathbb{F}_{n-j+1}^{(s)} \\ &=&\sum\limits_{t=1}^{n}\binom{n-t+r-1}{r-1}\mathbb{F}_{t}^{(s)}\text{.}
	\end{eqnarray*}
\end{proof}

%%%%%%%%%%%%%%%%%%%%%%%%%%%%%%%%%%%%%%%%%%%%%%%%%%%%%%%%%%%%%%%%%%%%%%%%%%%%%
%%%%%%%%%%%%%%                                                 %%%%%%%%%%%%%%
%%%%%%%%%%%%%%  An Application of Harmonic an Hyperharmonic    %%%%%%%%%%%%%%
%%%%%%%%%%%%%%   Fibonacci Numbers in Circulant Matrices       %%%%%%%%%%%%%%
%%%%%%%%%%%%%%                                                 %%%%%%%%%%%%%%
%%%%%%%%%%%%%%%%%%%%%%%%%%%%%%%%%%%%%%%%%%%%%%%%%%%%%%%%%%%%%%%%%%%%%%%%%%%%%
\section{An Application of Harmonic and Hyperharmonic Fibonacci Numbers in Circulant Matrices}

In this section, we will give some application on matrix norms of harmonic Fibonacci and hyperharmonic Fibonacci numbers. Recently, there have been many papers on the norms of circulant matrices with special numbers \cite{14,15}.

Let $A=(a_{ij})$ be any $m\times n$ complex matrix. The Euclidean norm and spectral norm of the matrix $A$ are respectively,
	\begin{equation*}
		\left\Vert A\right\Vert _{E}=\left(\sum\limits_{i=1}^{m}\sum\limits_{j=1}^{n}\left\vert a_{ij}\right\vert^{2}\right) ^{\frac{1}{2}}
	\end{equation*}
and
	\begin{equation*}
	\left\Vert A\right\Vert _{2}=\sqrt{\max_{1\leq i\leq n}\left\vert \lambda_{i}(A^{H}A)\right\vert }
	\end{equation*}
where $\lambda _{i}(A^{H}A)$ is eigenvalue of $A^{H}A$ and $A^{H}$ is conjugate transpose of the matrix $A$. Then the following inequality holds:
	\begin{equation}
		\left\Vert A\right\Vert _{2}\leq \left\Vert A\right\Vert _{E}\leq \sqrt{n}\left\Vert A\right\Vert _{2}\text{.}  \label{3}
	\end{equation}
A circulant matrix of order $n$ is meant a square matrix of the form
	\begin{equation*}
		C=Circ(c_{0},c_{1},\ldots ,c_{n-1})=
			\begin{pmatrix}
				c_{0} & c_{1} & c_{2} & \ldots  & c_{n-1} \\ 
				c_{n-1} & c_{0} & c_{1} & \ldots  & c_{n-2} \\ 
				\vdots  & \vdots  & \vdots  &  & \vdots  \\ 
				c_{1} & c_{2} & c_{3} & \ldots  & c_{0}%
			\end{pmatrix}
	\end{equation*}
\begin{theorem} 
Let $C_{1}=Circ\,\bigl (\mathbb{F}_{0},\mathbb{F}_{1},\mathbb{F}_{2},\ldots ,\mathbb{F}_{n-1}\bigr )$ be $n\times n$ circulant matrix. The Euclidean norm of $C_{1}$ is
	\begin{equation*}
		\left\Vert C_{1}\right\Vert _{E}=\left[ n^{2}\mathbb{F}_{n}^{2}-n\sum\limits_{k=0}^{n-1}\frac{k+1}{F_{k+1}}\left( 2\mathbb{F}_{k}+\frac{1}{F_{k+1}}\right) \right] ^{\frac{1}{2}}.
	\end{equation*}
\end{theorem}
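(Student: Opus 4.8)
The plan is to compute the Euclidean norm directly from its definition, exploiting the circulant structure. Since $C_1 = \mathrm{Circ}(\mathbb{F}_0,\mathbb{F}_1,\ldots,\mathbb{F}_{n-1})$ is an $n\times n$ circulant matrix, each of the values $\mathbb{F}_0,\mathbb{F}_1,\ldots,\mathbb{F}_{n-1}$ appears exactly $n$ times among its entries (once in each row, shifted cyclically). Therefore
\begin{equation*}
\left\Vert C_1\right\Vert_E^2 = \sum_{i=1}^{n}\sum_{j=1}^{n}\left\vert (C_1)_{ij}\right\vert^2 = n\sum_{k=0}^{n-1}\mathbb{F}_k^2 .
\end{equation*}

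Next I would invoke the already-established identity (\ref{norm1}), namely
\begin{equation*}
\sum_{k=0}^{n-1}\mathbb{F}_k^2 = n\mathbb{F}_n^2 - \sum_{k=0}^{n-1}\frac{k+1}{F_{k+1}}\left(2\mathbb{F}_k+\frac{1}{F_{k+1}}\right),
\end{equation*}
which was proved earlier via the anti-difference summation-by-parts formula (\ref{0}). Substituting this into the expression for $\left\Vert C_1\right\Vert_E^2$ gives
\begin{equation*}
\left\Vert C_1\right\Vert_E^2 = n^2\mathbb{F}_n^2 - n\sum_{k=0}^{n-1}\frac{k+1}{F_{k+1}}\left(2\mathbb{F}_k+\frac{1}{F_{k+1}}\right),
\end{equation*}
and taking square roots yields the claimed formula.

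There is no real obstacle here: the only subtle point is the combinatorial bookkeeping that each diagonal value of the circulant appears exactly $n$ times, so that the double sum collapses to $n\sum_{k=0}^{n-1}\mathbb{F}_k^2$; everything else is a direct appeal to (\ref{norm1}). If one wanted to be careful, one could note that all the $\mathbb{F}_k$ are real and nonnegative, so the absolute values in the definition of $\left\Vert\cdot\right\Vert_E$ are harmless. Thus the proof is essentially two lines once (\ref{norm1}) is in hand.
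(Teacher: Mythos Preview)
Your proof is correct and follows exactly the paper's own approach: compute $\left\Vert C_1\right\Vert_E^2 = n\sum_{k=0}^{n-1}\mathbb{F}_k^2$ from the circulant structure, then substitute the identity (\ref{norm1}). The paper's proof is essentially the same two lines, stated slightly more tersely.
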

\begin{proof}
From the definition of Euclidean norm, we have
	\begin{equation*}
		\left\Vert C_{1}\right\Vert _{E}^{2}=n\sum_{k=0}^{n-1}\mathbb{F}_{k}^{2}\text{.}
	\end{equation*}
Then by using the equation (\ref{norm1}), we have
	\begin{equation*}
		\left\Vert C_{1}\right\Vert _{E}=\left[ n^{2}\mathbb{F}_{n}^{2}-n\sum\limits_{k=0}^{n-1}\frac{k+1}{F_{k+1}}\left( 2\mathbb{F}_{k}+\frac{1}{F_{k+1}}\right) \right] ^{\frac{1}{2}}\text{.}
	\end{equation*}
\end{proof}

For the proof of the following theorem, we use same method in \cite{14}.
\begin{theorem} \label{theorem4}
Let $C_{1}=Circ\,\bigl(\mathbb{F}_{0},\mathbb{F}_{1},\mathbb{F}_{2},\ldots ,\mathbb{F}_{n-1}\bigl)$ be $n\times n$ circulant matrix. The spectral norm of $C_{1}$ is
	\begin{equation*}
		\left\Vert C_{1}\right\Vert _{2}=n\mathbb{F}_{n}-\sum_{k=0}^{n-1}\frac{k+1}{F_{k+1}}\text{.}
	\end{equation*}
\end{theorem}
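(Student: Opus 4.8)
The plan is to exploit the very special structure of the circulant matrix $C_1 = \mathrm{Circ}(\mathbb{F}_0,\mathbb{F}_1,\ldots,\mathbb{F}_{n-1})$ together with the fact that every entry is a positive real number (recall $\mathbb{F}_0 = 0$, and $\mathbb{F}_k > 0$ for $k \geq 1$). A classical fact about circulant matrices is that they are normal, so that the singular values coincide with the absolute values of the eigenvalues, and the eigenvalues are given explicitly by $\lambda_j = \sum_{k=0}^{n-1}\mathbb{F}_k\,\omega^{jk}$ for $j=0,1,\ldots,n-1$, where $\omega = e^{2\pi i/n}$. Hence $\|C_1\|_2 = \max_j |\lambda_j|$. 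Since all $\mathbb{F}_k \geq 0$, the triangle inequality gives $|\lambda_j| \leq \sum_{k=0}^{n-1}\mathbb{F}_k = |\lambda_0|$, with equality attained at $j=0$. Therefore $\|C_1\|_2 = \lambda_0 = \sum_{k=0}^{n-1}\mathbb{F}_k$.

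The remaining step is purely a matter of identifying this sum with the claimed closed form. By Theorem \ref{theorem21} we have exactly
\begin{equation*}
\sum_{k=0}^{n-1}\mathbb{F}_k = n\mathbb{F}_n - \sum_{k=0}^{n-1}\frac{k+1}{F_{k+1}},
\end{equation*}
which is precisely the asserted expression for $\|C_1\|_2$. So the proof assembles as: (i) invoke normality of circulant matrices and the explicit eigenvalue formula; (ii) use nonnegativity of the entries to conclude the spectral radius equals the row sum $\lambda_0$; (iii) substitute the evaluation of $\sum_{k=0}^{n-1}\mathbb{F}_k$ from Theorem \ref{theorem21}.

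I expect the only genuine subtlety to be step (ii), namely the justification that the maximum modulus eigenvalue is the one at $j=0$. This follows cleanly from the triangle inequality because the coefficients $\mathbb{F}_k$ are all $\geq 0$; one should state this explicitly rather than appeal vaguely to "the method in \cite{14}." An alternative, perhaps more in the spirit of the referenced paper, is to use the bracketing inequality (\ref{3}): one shows $\|C_1\|_2 \geq \frac{1}{\sqrt n}\|C_1\|_E$ from the previous theorem gives a lower bound, while the Frobenius-type estimate plus a bound on the maximal column sum gives the matching upper bound $\|C_1\|_2 \leq \max_j \sum_k |c_{kj}| = \sum_{k=0}^{n-1}\mathbb{F}_k$. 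Either route closes the argument; the eigenvalue route is shorter and I would present that one, noting the nonnegativity of the $\mathbb{F}_k$ as the key point that makes it work.
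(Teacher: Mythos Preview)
Your proof is correct and shares the same overall skeleton as the paper's: use normality of circulants to reduce $\|C_1\|_2$ to the spectral radius, identify the spectral radius as $\sum_{k=0}^{n-1}\mathbb{F}_k$, and then invoke Theorem~\ref{theorem21} for the closed form. The difference lies only in how the spectral radius is pinned down. The paper appeals to Perron--Frobenius theory: since $C_1$ is nonnegative and irreducible (note $\mathbb{F}_1>0$, so the associated digraph is strongly connected), its spectral radius is the Perron root, and the all-ones vector $\upsilon$ is a positive eigenvector with eigenvalue $\sum_k\mathbb{F}_k$, which therefore must be that root. Your route via the explicit circulant eigenvalue formula $\lambda_j=\sum_k\mathbb{F}_k\,\omega^{jk}$ together with the triangle inequality is more elementary---it sidesteps Perron--Frobenius entirely and makes the role of nonnegativity completely transparent. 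The paper's route, by contrast, does not need the closed-form eigenvalues and would apply verbatim to any nonnegative irreducible normal matrix with constant row sums. Either argument is fully rigorous here; your remark that the nonnegativity of the $\mathbb{F}_k$ is the key point is exactly right and worth stating explicitly.
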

\begin{proof}
Since the circulant matrices are normal, the spectral norm of the circulant $C_{1}$ is equal to its spectral radius. Furthermore, $C_{1}$ is irreducible and its entries are nonnegative, we have that the spectral radius of the matrix $C_{1}$ is equal to its Perron root. Let $\upsilon $ be a vector with
all components $1$. Then
	\begin{equation*}
		C_{1}\upsilon =\left( \sum_{k=0}^{n-1}\mathbb{F}_{k}\right) \upsilon \text{.}
	\end{equation*}
Obviously, $\sum\limits_{k=0}^{n-1}\mathbb{F}_{k}$ is an eigenvalues of $C_{1}$. Corresponding a positive eigenvector, it must be the Perron root of the matrix $C_{1}$. Hence from the Theorem \ref{theorem21}, we have
	\begin{equation*}
		\left\Vert C_{1}\right\Vert _{2}=n\mathbb{F}_{n}-\sum_{k=0}^{n-1}\frac{k+1}{F_{k+1}}\text{.}
	\end{equation*}
\end{proof}
\begin{theorem} \label{theorem43}
Let $C_{2}=Circ\,\bigl(\mathbb{F}_{0}^{(r)},\mathbb{F}_{1}^{(r)},\mathbb{F}_{2}^{(r)},\ldots ,\mathbb{F}_{n-1}^{(r)}\bigl)$ be $n\times n$ circulant matrix.
The spectral norm of $C_{2}$ is
	\begin{equation*}
		\left\Vert C_{2}\right\Vert _{2}=\mathbb{F}_{n-1}^{(r+1)}\text{.}
	\end{equation*}
\end{theorem}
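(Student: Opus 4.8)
The plan is to follow exactly the template of the proof of Theorem \ref{theorem4}, since $C_{2}$ is again a circulant matrix with nonnegative entries. First I would invoke the fact that every circulant matrix is normal, so that $\left\Vert C_{2}\right\Vert _{2}$ equals the spectral radius $\rho(C_{2})$. Next, observing that the entries $\mathbb{F}_{k}^{(r)}$ are nonnegative (indeed $\mathbb{F}_{0}^{(r)}=0$ and $\mathbb{F}_{k}^{(r)}>0$ for $k\geq 1$, so for $n\geq 2$ the off-diagonal band contains the positive entry $\mathbb{F}_{1}^{(r)}=1$ and $C_{2}$ is irreducible), I would apply the Perron--Frobenius theorem: the spectral radius of $C_{2}$ is its Perron root, the unique eigenvalue admitting a positive eigenvector.

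Then I would exhibit that eigenvector explicitly. Let $\upsilon$ be the vector all of whose components equal $1$. Since each row of $C_{2}$ is a permutation of $\mathbb{F}_{0}^{(r)},\mathbb{F}_{1}^{(r)},\ldots ,\mathbb{F}_{n-1}^{(r)}$, we get
	\begin{equation*}
		C_{2}\upsilon =\left( \sum_{k=0}^{n-1}\mathbb{F}_{k}^{(r)}\right) \upsilon \text{,}
	\end{equation*}
so $\sum_{k=0}^{n-1}\mathbb{F}_{k}^{(r)}$ is an eigenvalue of $C_{2}$ corresponding to a positive eigenvector; hence it is the Perron root and equals $\left\Vert C_{2}\right\Vert _{2}$.

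Finally I would simplify this sum using the defining recurrence (\ref{1}) for hyperharmonic Fibonacci numbers. Because $\mathbb{F}_{0}^{(r)}=0$, we have $\sum_{k=0}^{n-1}\mathbb{F}_{k}^{(r)}=\sum_{k=1}^{n-1}\mathbb{F}_{k}^{(r)}$, and by (\ref{1}) with order $r+1$ the right-hand side is precisely $\mathbb{F}_{n-1}^{(r+1)}$. This yields $\left\Vert C_{2}\right\Vert _{2}=\mathbb{F}_{n-1}^{(r+1)}$. I do not anticipate a genuine obstacle here; the only point needing a word of care is the irreducibility hypothesis for Perron--Frobenius, which holds for $n\geq 2$ thanks to the positive entry $\mathbb{F}_{1}^{(r)}=1$, while the case $n=1$ is immediate since then $C_{2}=(0)$ and $\mathbb{F}_{0}^{(r+1)}=0$.
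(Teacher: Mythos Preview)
Your proposal is correct and follows exactly the approach of the paper: invoke normality and Perron--Frobenius as in Theorem~\ref{theorem4} to identify $\left\Vert C_{2}\right\Vert _{2}$ with $\sum_{k=0}^{n-1}\mathbb{F}_{k}^{(r)}$, then use the defining recurrence~(\ref{1}) to rewrite this sum as $\mathbb{F}_{n-1}^{(r+1)}$. Your added remarks on irreducibility for $n\geq 2$ and the trivial case $n=1$ are in fact more careful than the paper's own treatment.
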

\begin{proof}
Analysis similar to that in the proof of Theorem \ref{theorem4} shows that
	\begin{equation*}
		\left\Vert C_{2}\right\Vert _{2}=\sum_{k=0}^{n-1}\mathbb{F}_{k}^{(r)}\text{.}
	\end{equation*}
From the definition of hyperharmonic Fibonacci numbers, we have
	\begin{equation*}
		\left\Vert C_{2}\right\Vert _{2}=\mathbb{F}_{n-1}^{(r+1)}\text{.}
	\end{equation*}
\end{proof}
\begin{corollary} \label{corollary3}
For the Euclidean norm of the matrix $C_{2}=Circ \bigl(\mathbb{F}_{0}^{(r)},\mathbb{F}_{1}^{(r)},\mathbb{F}_{2}^{(r)},\ldots ,\mathbb{F}_{n-1}^{(r)}\bigl)$, we have
	\begin{equation*}
		\mathbb{F}_{n-1}^{(r+1)}\leq \left\Vert C_{2}\right\Vert _{E}\leq \sqrt{n} \:\,\mathbb{F}_{n-1}^{(r+1)}\text{.}
	\end{equation*}
\end{corollary}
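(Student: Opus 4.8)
The plan is to obtain the corollary directly from the general norm inequality (\ref{3}) together with the value of the spectral norm of $C_{2}$ already established in Theorem \ref{theorem43}. Since $C_{2}$ is an $n\times n$ matrix, (\ref{3}) applies with this $n$ and gives $\left\Vert C_{2}\right\Vert _{2}\leq \left\Vert C_{2}\right\Vert _{E}\leq \sqrt{n}\left\Vert C_{2}\right\Vert _{2}$. Substituting the identity $\left\Vert C_{2}\right\Vert _{2}=\mathbb{F}_{n-1}^{(r+1)}$ from Theorem \ref{theorem43} into both ends of this chain immediately yields
	\begin{equation*}
		\mathbb{F}_{n-1}^{(r+1)}\leq \left\Vert C_{2}\right\Vert _{E}\leq \sqrt{n}\:\mathbb{F}_{n-1}^{(r+1)},
	\end{equation*}
which is the claimed statement. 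So the whole argument is one line once Theorem \ref{theorem43} is in hand.

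If one prefers a self-contained computation that does not route through the spectral norm, the alternative is to start from the definition $\left\Vert C_{2}\right\Vert _{E}^{2}=n\sum_{k=0}^{n-1}\bigl(\mathbb{F}_{k}^{(r)}\bigr)^{2}$ and note that, because all the entries $\mathbb{F}_{k}^{(r)}$ are nonnegative, we have $\tfrac{1}{n}\bigl(\sum_{k=0}^{n-1}\mathbb{F}_{k}^{(r)}\bigr)^{2}\leq \sum_{k=0}^{n-1}\bigl(\mathbb{F}_{k}^{(r)}\bigr)^{2}\leq \bigl(\sum_{k=0}^{n-1}\mathbb{F}_{k}^{(r)}\bigr)^{2}$ (the left inequality is Cauchy--Schwarz/power mean, the right is just dropping cross terms). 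Multiplying by $n$, using $\mathbb{F}_{0}^{(r)}=0$ and the definition (\ref{1}) to rewrite $\sum_{k=0}^{n-1}\mathbb{F}_{k}^{(r)}=\sum_{k=1}^{n-1}\mathbb{F}_{k}^{(r)}=\mathbb{F}_{n-1}^{(r+1)}$, and then taking square roots reproduces the same bounds.

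There is essentially no obstacle to this corollary; the only things that need checking are that the order of $C_{2}$ is exactly $n$ (so the factor $\sqrt{n}$ in (\ref{3}) is the right one) and that Theorem \ref{theorem43} supplies the spectral norm exactly rather than merely a bound — both of which hold. I would present the short proof via (\ref{3}) and Theorem \ref{theorem43} as the main argument, since it is cleanest, and leave the elementary inequality computation as a remark if space permits.
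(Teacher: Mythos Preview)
Your main argument is correct and is exactly the paper's approach: the paper's proof simply says that the corollary is trivial from Theorem \ref{theorem43} together with the inequality (\ref{3}). Your alternative self-contained computation is a nice extra remark but is not needed.
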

\begin{proof}
The proof is trivial from Theorem \ref{theorem43} and the relation between spectral norm and Euclidean norm in (\ref{3}).
\end{proof}
\begin{corollary}
For the sum of the squares of hyperharmonic Fibonacci numbers, we have
	\begin{equation*}
		\frac{1}{\sqrt{n}}\:\mathbb{F}_{n-1}^{(r+1)}\leq \sqrt{\sum_{k=0}^{n-1}\left(\mathbb{F}_{k}^{(r)}\right) ^{2}}\leq \mathbb{F}_{n-1}^{(r+1)}\text{.}
	\end{equation*}
\end{corollary}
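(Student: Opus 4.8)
The plan is to reduce this statement directly to Corollary \ref{corollary3} via the explicit value of the Euclidean norm of the circulant matrix $C_{2}=Circ\bigl(\mathbb{F}_{0}^{(r)},\mathbb{F}_{1}^{(r)},\ldots,\mathbb{F}_{n-1}^{(r)}\bigr)$. First I would observe that in any $n\times n$ circulant matrix each of the generating entries $c_{0},c_{1},\ldots,c_{n-1}$ occurs exactly once in each row, hence exactly $n$ times in the whole matrix. Applying this to $C_{2}$ with $c_{k}=\mathbb{F}_{k}^{(r)}$ gives
	\begin{equation*}
		\left\Vert C_{2}\right\Vert_{E}^{2}=n\sum_{k=0}^{n-1}\left(\mathbb{F}_{k}^{(r)}\right)^{2},
	\end{equation*}
so that $\left\Vert C_{2}\right\Vert_{E}=\sqrt{n}\,\sqrt{\sum_{k=0}^{n-1}\left(\mathbb{F}_{k}^{(r)}\right)^{2}}$.

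Next I would substitute this expression into the two-sided bound of Corollary \ref{corollary3}, namely $\mathbb{F}_{n-1}^{(r+1)}\leq\left\Vert C_{2}\right\Vert_{E}\leq\sqrt{n}\,\mathbb{F}_{n-1}^{(r+1)}$, obtaining
	\begin{equation*}
		\mathbb{F}_{n-1}^{(r+1)}\leq\sqrt{n}\,\sqrt{\sum_{k=0}^{n-1}\left(\mathbb{F}_{k}^{(r)}\right)^{2}}\leq\sqrt{n}\,\mathbb{F}_{n-1}^{(r+1)}.
	\end{equation*}
Dividing through by $\sqrt{n}$ (which is positive) yields exactly the claimed inequality
	\begin{equation*}
		\frac{1}{\sqrt{n}}\,\mathbb{F}_{n-1}^{(r+1)}\leq\sqrt{\sum_{k=0}^{n-1}\left(\mathbb{F}_{k}^{(r)}\right)^{2}}\leq\mathbb{F}_{n-1}^{(r+1)},
	\end{equation*}
and the proof is complete.

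There is essentially no obstacle here: the only point requiring any care is the combinatorial count of how often each generator appears in a circulant matrix, which underlies the formula for $\left\Vert C_{2}\right\Vert_{E}^{2}$; everything else is algebraic manipulation of the already-established Corollary \ref{corollary3}. One could equally phrase this without ever naming $C_{2}$ again, simply noting that Corollary \ref{corollary3} is literally a rescaling of the desired statement, but invoking the norm identity makes the logical chain transparent.
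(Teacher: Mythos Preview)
Your proof is correct and follows exactly the route the paper indicates: it uses the definition of the Euclidean norm of the circulant $C_{2}$ to get $\left\Vert C_{2}\right\Vert_{E}=\sqrt{n}\,\sqrt{\sum_{k=0}^{n-1}\left(\mathbb{F}_{k}^{(r)}\right)^{2}}$ and then rescales Corollary~\ref{corollary3} by $1/\sqrt{n}$. The paper's own proof is just a one-line pointer to these two ingredients, which you have spelled out explicitly.
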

\begin{proof}
It is easy seen that from the definition of Euclidean norm and Corollary \ref{corollary3}
\end{proof}

%%%%%%%%%%%%%%%%%%%%%%%%%%%%%%%%%%%%%%%%%%%%%%%%%%%%%%%%%%%%
%%%%%%%%%%%%%%                                %%%%%%%%%%%%%%
%%%%%%%%%%%%%%            References          %%%%%%%%%%%%%%
%%%%%%%%%%%%%%                                %%%%%%%%%%%%%%
%%%%%%%%%%%%%%%%%%%%%%%%%%%%%%%%%%%%%%%%%%%%%%%%%%%%%%%%%%%%

\end{document}